\newtheorem{theorem}{Theorem}[section]
\newtheorem{lemma}[theorem]{Lemma}
\newtheorem{proposition}[theorem]{Proposition}
\theoremstyle{definition}
\theoremstyle{remark}
\newtheorem{remark}[theorem]{Remark}
\newtheorem*{formulas}{Formulas}
\numberwithin{equation}{section}
\begin{document}

\title[A Study of Submanifolds of the Complex Grassmannian Manifold]{A Study of Submanifolds of the Complex Grassmannian Manifold with Parallel Second Fundamental Form}


\author{Isami Koga}
\address{Faculty of Mathematics, KYUSHU UNIVERSITY 744 Motooka, Nishi-ku, Fukuoka, 819-0395, JAPAN}
\curraddr{}
\email{i-koga@math.kyushu-u.ac.jp}
\thanks{}

\author{Yasuyuki Nagatomo}
\address{Department of Mathematics, MEIJI UNIVERSITY, 
Higashi-Mita, Tama-ku, Kawasaki-shi, Kanagawa 214-8571, JAPAN}
\curraddr{}
\email{yasunaga@meiji.ac.jp}
\thanks{}

\subjclass[2010]{53C07, 53C40, 53C55}

\date{}

\dedicatory{}

\commby{}

\begin{abstract}
We prove an extension of a theorem of A.Ros on {\it a characterization of seven compact Kaehler submanifolds by holomorphic pinching} \cite{cite6}  to certain submanifolds of the complex Grassmannian manifolds.
\end{abstract}

\maketitle


\section{Introduction}
Let $\mathbb{C}P^n(1)$ be the $n$-dimensional complex projective space with the constant holomorphic sectional curvature 1 and $M^m$ an $m$-dimensional compact K\"{a}hler submanifold immersed in $\mathbb{C}P^n(1)$. In \cite{cite6} Ros proved that the holomorphic sectional curvature of $M$ is greater than or equal to $\frac{1}{2}$ if and only if $M$ has the parallel second fundamental form. Our goal in the present paper is to extend this resut to submanifolds immersed in the complex Grassmannian manifold.

Let $Gr_p(\mathbb{C}^n)$ be the complex Grassmannian manifold of complex $p$-planes in $\mathbb{C}^n$. Since the tautological bundle $S \rightarrow Gr_p(\mathbb{C}^n)$ is a subbundle of a trivial bundle $Gr_p(\mathbb{C}^n)\times \mathbb{C}^n \rightarrow Gr_p(\mathbb{C}^n)$, we obtain a quotient bundle $Q \rightarrow Gr_p(\mathbb{C}^n)$. This is called the universal quotient bundle. We notice the fact that the holomorphic tangent bundle $T_{1,0}M$ over $Gr_p(\mathbb{C}^n)$ can be identified with the tensor product of holomorphic vector bundles $S^*$ and $Q$, where $S^* \rightarrow Gr_p(\mathbb{C}^n)$ is the dual bundle of $S \rightarrow Gr_p(\mathbb{C}^n)$. If $\mathbb{C}^n$ has a Hermitian inner product, $Gr_p(\mathbb{C}^n)$ has a standard metric of Fubini-Study type and $S$, $Q$ have Hermitian metrics and Hermitian connections. In the present paper, we prove the following theorem:

\begin{theorem}
Let $Gr_p(\mathbb{C}^n)$ be the complex Grassmannian manifold of complex $p$-planes in $\mathbb{C}^n$ with a standard metric $h_{Gr}$ induced from a Hermitian inner product on $\mathbb{C}^n$ and $f$ a holomorphic isometric immersion of a compact K\"{a}hler manifold $(M, h_M)$ with a Heremitian metric $h_M$. We denote by $Q\rightarrow Gr_p(\mathbb{C}^n)$ the universal quotient bundle over $Gr_p(\mathbb{C}^n)$ of rank $q$.  We assume that the pull-back bundle of $Q \rightarrow Gr_p(\mathbb{C}^n)$ is projectively flat. Then the holomorphic sectional curvature of $M$ is greater than or equal to $\frac{1}{q}$ if and only if $M$ has the parallel second fundamental form.
\end{theorem}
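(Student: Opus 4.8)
The plan is to use projective flatness of $f^{*}Q$ to show that, along $M$, the ambient geometry is that of a complex space form, and then to run the argument of \cite{cite6}. Over $M$ one has the flat trivial bundle $\underline{\mathbb{C}}^{n}$ containing the holomorphic subbundle $f^{*}S$, with $C^{\infty}$-orthogonal complement $\cong f^{*}Q$; write $A\in\Omega^{1,0}(M;\mathrm{Hom}(f^{*}S,f^{*}Q))$ for the second fundamental form of this subbundle. Flatness of $\underline{\mathbb{C}}^{n}$ gives the structure equations: $A$ is holomorphic, $F^{f^{*}Q}=A\wedge A^{\dagger}$, $F^{f^{*}S}=-A^{\dagger}\wedge A$, and $\nabla'A$ is symmetric. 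Under $T^{1,0}Gr_p(\mathbb{C}^n)\cong S^{*}\otimes Q=\mathrm{Hom}(S,Q)$ one has $df=A$, the curvature tensor of $Gr_p(\mathbb{C}^n)$ is $R^{Gr}(X,\bar Y)Z=A_{X}A_{Y}^{\dagger}A_{Z}+A_{Z}A_{Y}^{\dagger}A_{X}$, and the second fundamental form $\sigma$ of the immersion $f$ is the $N^{1,0}M$-component of $\nabla'A$. Now projective flatness says $F^{f^{*}Q}=\psi\otimes\mathrm{Id}_{Q}$ for a scalar $(1,1)$-form $\psi$, i.e. $A_{X}A_{Y}^{\dagger}=\psi(X,\bar Y)\,\mathrm{Id}_{Q}$ for all $X,Y\in T^{1,0}M$; taking $\mathrm{tr}_{Q}$ and using that $f$ is isometric (so $\mathrm{tr}_{Q}F^{f^{*}Q}$ is the standard multiple of the Kähler form of $M$) pins down $\psi(X,\bar Y)=\tfrac1q\,h_{M}(X,Y)$ up to the constant fixed by the normalisation of $h_{Gr}$. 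Substituting $A_{X}A_{Y}^{\dagger}=\tfrac1q h_{M}(X,Y)\mathrm{Id}_{Q}$ into the curvature formula gives $R^{Gr}(X,\bar Y)Z=\tfrac1q\big(h_{M}(X,Y)\,Z+h_{M}(Z,Y)\,X\big)$ for $X,Y,Z$ tangent to $M$: both terms are again tangent to $M$. Hence (i) along $M$ the holomorphic sectional curvature and the full tangential curvature operator of $Gr_p(\mathbb{C}^n)$ coincide with those of a complex space form of holomorphic sectional curvature $\tfrac2q$, and (ii) the Codazzi equation for $f$, which expresses $\nabla''\sigma$ as the normal component of $R^{Gr}(\bar W,X)Y$, gives $\nabla''\sigma=0$, so $\sigma$ is a holomorphic section of $\mathrm{Sym}^{2}\Omega^{1,0}_{M}\otimes N^{1,0}M$.

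By the Gauss equation one then gets $H^{M}(X)=\tfrac2q-\dfrac{2|\sigma(X,X)|^{2}}{|X|^{4}}$, so the hypothesis $H^{M}\geq\tfrac1q$ everywhere is equivalent to the pointwise pinching $|\sigma(X,X)|^{2}\leq\tfrac1{2q}|X|^{4}$ for all $X\in T^{1,0}M$ — exactly the pinching in Ros's theorem for a submanifold of $\mathbb{C}P^{N}(\tfrac2q)$. It remains to show this pinching is equivalent to $\nabla\sigma=0$. For the compact Kähler manifold $M$ one integrates the Simons/Bochner formula for $|\sigma|^{2}$: since $M$ is compact $\int_{M}\Delta|\sigma|^{2}=0$, and using $\nabla''\sigma=0$ together with the explicit tangential ambient curvature from the first step, one arrives at an identity of the form $\int_{M}|\nabla'\sigma|^{2}=\int_{M}\big(\text{quartic contractions of }\sigma\text{ and ambient curvature}\big)$. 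Feeding in the pinching forces the right-hand integrand to have a fixed sign, whence $\nabla'\sigma=0$, i.e. $\nabla\sigma=0$; conversely, if $\nabla\sigma=0$ the same identity forces equality in the pinching, so $H^{M}\geq\tfrac1q$. This is precisely the mechanism of \cite{cite6}, and once the ambient normal form has been established the remaining computations follow that paper.

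The main obstacle is this last step. Unlike the $\mathbb{C}P^{n}$ case, projective flatness of $f^{*}Q$ controls only the tangential part of the ambient curvature: the terms $R^{Gr}(X,\bar Y)\nu$ with $\nu$ normal — which enter the Ricci equation, hence the Simons/Bochner formula through the curvature of the normal bundle — carry an extra contribution $\nu\,(A_{Y}^{\dagger}A_{X})$ absent for a complex space form. One must show either that these contributions cancel in the relevant contraction, or that they are controlled (for instance, that the first normal space carries the requisite curvature), so that Ros's sharp algebraic inequality still extracts the correct sign from $H^{M}\geq\tfrac1q$. Establishing this, together with the precise Simons identity in the present setting, is where the work lies; the passage from "$f^{*}Q$ projectively flat" to the explicit tangential normal form and $\nabla''\sigma=0$ is the other delicate point, and is what makes the reduction to \cite{cite6} possible.
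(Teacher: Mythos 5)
Your reduction of the tangential data is essentially the paper's: from $R^{f^*Q}(U,\overline V)=-H_UK_{\overline V}=\tfrac1q h_M(U,V)\,\mathrm{Id}_Q$ you recover that along $M$ the ambient curvature on tangential arguments is $\tfrac1q\bigl( h(Z,V)U+h(U,V)Z\bigr)$, hence ${\rm Hol}^{Gr}|_M=\tfrac2q$ and, via Codazzi, $\nabla_{\overline V}\sigma=0$; these are exactly Lemmas 3.1 and 3.2 of the paper. But the step you yourself flag as ``where the work lies'' --- controlling the normal-direction curvature $R^{Gr}(U,\overline U)\sigma(U,U)$ that enters through the Ricci equation --- is precisely the point the proposal leaves open, and it does not resolve itself: projective flatness of $f^*Q$ as stated only constrains $H_UK_{\overline V}$ for $U,V$ tangent to $M$, while the offending term is $h_{Gr}\bigl(-H_{\sigma(U,U)}K_{\overline U}H_U-H_UK_{\overline U}H_{\sigma(U,U)},\,H_{\sigma(U,U)}\bigr)$, whose first summand involves $H_{\sigma(U,U)}$ with the \emph{normal} vector $\sigma(U,U)$ and is not covered by the hypothesis $(\ast)$ alone. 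So as written the argument has a genuine gap, acknowledged but not filled.

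The missing idea in the paper is to differentiate the hypothesis: since the right-hand side of $(\ast)$ is $\tfrac1q h_M(U,V)\,\mathrm{Id}_Q$ and $h_M$ is parallel, one gets $(\nabla_Z R^{f^*Q})(U,\overline V)=0$ for every $(1,0)$-vector $Z$; on the other hand $(\nabla_U R^{f^*Q})(U,\overline U)=-(\nabla_UH)(U)K_{\overline U}=-H_{\sigma(U,U)}K_{\overline U}$, so $H_{\sigma(U,U)}K_{\overline U}=0$ (Lemma 3.6). This is exactly the cancellation you hoped for: the Ricci-equation term collapses to $\tfrac1q\|\sigma(U,U)\|^2+\|A_{\sigma(U,U)}\overline U\|^2$, and the second-derivative of the tensor $T(U,V,\overline Z,\overline W)=h_{Gr}(\sigma(U,V),\sigma(Z,W))$ evaluated on repeated arguments becomes $\tfrac3q\bigl(\|\sigma(U,U)\|^2-q\|A_{\sigma(U,U)}\overline U\|^2\bigr)+\|(\nabla\sigma)(U,U,U)\|^2$. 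Note also that the paper then integrates this over the \emph{unit tangent bundle} using a complexified form of Ros's lemma (not a Simons/Bochner formula for $|\sigma|^2$ over $M$), and in the forward direction converts ${\rm Hol}^M\ge\tfrac1q$ into $\|A_\xi\overline U\|^2\le\tfrac1q$ through the largest eigenvalue of the symmetric anti-linear operator $A_\xi\circ\tau$ and Cauchy--Schwarz; it is this unit-direction formulation that delivers the sharp constant, and your proposed Bochner identity for $|\sigma|^2$ would still need an analogue of that eigenvalue estimate. (Separately, check your Gauss-equation constant: with the paper's complexified conventions ${\rm Hol}^M(U)={\rm Hol}^{Gr}(U)-\|\sigma(U,U)\|^2$, so the pinching threshold is $\|\sigma(U,U)\|^2\le\tfrac1q$, not $\tfrac1{2q}$.)
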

\begin{remark}
We can suppose that $p\geq q$ without loss of generality because $Gr_p(\mathbb{C}^n) \cong Gr_{n-p}(\mathbb{C}^{n^*})$. In fact we can show that there is no immersion satisfying projectively flatness in the case that
$p < q$. (See Remark 3.1.)
\end{remark}
\begin{remark}
When $q=1$, Theorem 1.1 is the same as  a theorem of Ros \cite{cite6}. In this case, $f^*Q\rightarrow M$ is projectively flat for any submanifold because $Q\rightarrow Gr_p(\mathbb{C}^n)$ is of rank 1. The sufficient condition in our theorem is that the holomorphic sectional curvature is greater than or equal to 1, which is distinct from $\frac{1}{2}$ in a theorem of Ros. This is because we take a metric of Fubini-Study type with constant holomorphic sectional curvature 2.
\end{remark}


\section{Preliminaries}
Let $Gr_p(\mathbb{C}^n)$ be the complex Grassmannian manifold of complex $p$-planes in $\mathbb{C}^n$ with a standard metric $h_{Gr}$ induced from a Hermitian inner product on $\mathbb{C}^n$. We denote by $S$ the tautological vector bundle over $Gr_p(\mathbb{C}^n)$. Since $S \rightarrow Gr_p(\mathbb{C}^n)$ is a subbundle of a trivial vector bundle $\underline{\mathbb{C}^n}=Gr_p(\mathbb{C}^n) \times \mathbb{C}^n \rightarrow Gr_p(\mathbb{C}^n)$, we obtain a holomorphic vector bundle $Q \rightarrow Gr_p(\mathbb{C}^n)$ as a quotient bundle. This is called the universal quotient bundle over $Gr_p(\mathbb{C}^n)$. For simplicity, it is called the quotient bundle. Consequently we have a short exact sequence of vector bundles:
 \begin{eqnarray}
 0 \rightarrow S \rightarrow \underline{\mathbb{C}^n} \rightarrow Q \rightarrow 0. \nonumber
 \end{eqnarray}
Taking the orthogonal complement of  $S$ in $\underline{\mathbb{C}^n}$ with respect to the Hermitian inner product on $\mathbb{C}^n$, we obtain a complex subbundle $S^{\perp}$ of $\underline{\mathbb{C}^n}$. As  $C^{\infty}$ complex vector bundle, $Q$ is naturally isomorphic to $S^{\perp}$. Consequently, the vector bundle $S \rightarrow Gr_p(\mathbb{C}^n)$ (resp. $Q \rightarrow Gr_p(\mathbb{C}^n)$) is equipped with a Hermitian metric $h_S$ (resp.$h_Q$) and so a Hermitian connection $\nabla^S$ (resp.$\nabla^Q$). The holomorphic tangent bundle over $Gr_p(\mathbb{C}^n)$ is identified with $S^* \otimes Q\rightarrow Gr_p(\mathbb{C}^n)$ and the Hermitian metric on the holomorphic tangent bundle is induced from the tensor product of $h_{S^*}$ and $h_Q$.

Let $w_1$,$\cdots$,$w_n$ be a unitary basis of $\mathbb{C}^n$. We denote by $\mathbb{C}^p$ the subspace of $\mathbb{C}^n$ spanned by $w_1$,$\cdots$,$w_p$ and by $\mathbb{C}^q$ the orthogonal complement of $\mathbb{C}^p$, where $q=n-p$. The orthogonal projection to $\mathbb{C}^p$, $\mathbb{C}^q$ is denoted by $\pi_q$, $\pi_q$ respectively. Let $G$ be the special unitary group $SU(n)$ and $P$ the subgroup $S\left( U(p) \times U(q)\right)$ of $SU(n)$ according to the decomposition. Then, $Gr_p(\mathbb{C}^n) \cong G/P$. The vector bundles $S$,$Q$ are identified with $G\times_P\mathbb{C}^p$,  $G\times_P\mathbb{C}^q$ respectively. We denote by $\Gamma(S)$, $\Gamma(Q)$  spaces of sections of $S$, $Q$ respectively. Let $\pi_Q:\mathbb{C}^n \rightarrow \Gamma(Q)$ be a linear map defined by
\begin{eqnarray}
\pi_Q(w) := [g, \pi_q(g^{-1}w)] \in G\times_P\mathbb{C}^q  ,\qquad w\in \mathbb{C}^n, g\in G. \nonumber
\end{eqnarray}
The bundle injection $i_Q:Q\rightarrow \underline{\mathbb{C}^n}$ can be expressed by the following :
\begin{eqnarray}
i_Q([g,v]) = ([g], gv) ,	\qquad v \in \mathbb{C}^q,\ g\in G,\ [g] \in Gr_p(\mathbb{C}^n) \cong G/P. \nonumber
\end{eqnarray}
Let $t$ be a section of $Q\rightarrow Gr_p(\mathbb{C}^n)$. Since $i_{Q}(t)$ can be regarded as a $\mathbb{C}^n$-valued function $t:Gr_p(\mathbb{C}^n)\rightarrow \mathbb{C}^n$, $\pi_Qd(i_Q(t))$ defines a connection on $Q$. This is nothing but $\nabla ^Q$.

Similarly, we can write a bundle injection $i_S:S\rightarrow \underline{\mathbb{C}^n}$ and a linear map $\pi _S:\mathbb{C}^n\rightarrow \Gamma(S)$:
\begin{align}
i_S([g,u]) &= ([g], gu),& \qquad &u\in \mathbb{C}^p,\ g\in G,\ [g]\in G/P, \nonumber \\
\pi_S(w) :&= [g, \pi_p(g^{-1}w)],&  &w\in \mathbb{C}^n,\ g\in G. \nonumber
\end{align}
The connection $\pi_Sd(i_S(s))$ on $S$ is nothing but $\nabla^S$.

We introduce the second fundamental form $H$ in the sense of Kobayashi \cite{cite2}, which is a (1,0)-form with values in ${\rm Hom}(S,Q) \cong S^*\otimes Q$ :
\begin{eqnarray}
di_S(s) = \nabla^Ss + H(s), \quad H(s) = \pi_Qd(i_S(s)), \qquad s\in \Gamma(S).
\end{eqnarray}
Similarly, we introduce the second fundamental form $K$, which is a (0,1)-form with values in ${\rm Hom}(Q,S)\cong Q^*\otimes S$ :
\begin{eqnarray}
di_Q(t) = K(t) + \nabla^Qt, \quad K(t) = \pi_Sd(i_Q(t)), \qquad t\in \Gamma(Q).
\end{eqnarray}
\begin{lemma}
The second fundamental forms $H$ and $K$ satisfy
\begin{eqnarray}
h_Q(Hs, t) = -h_S(s,Kt), \quad s\in S_x, t\in Q_x, \nonumber
\end{eqnarray}
for any $x\in Gr_p(\mathbb{C}^n)$.
\end{lemma}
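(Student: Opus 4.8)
The plan is to deduce the identity from a single global fact together with the definitions of $H$ and $K$. Write $h_{\mathbb{C}^n}$ for the given Hermitian inner product on $\mathbb{C}^n$ (equivalently, the flat Hermitian metric on $\underline{\mathbb{C}^n}$). Since, as $C^{\infty}$ complex vector bundles, $Q$ is identified with the orthogonal complement $S^{\perp}$ of $S$ in $\underline{\mathbb{C}^n}$, the subbundles $i_S(S)$ and $i_Q(Q)$ are mutually $h_{\mathbb{C}^n}$-orthogonal, and by construction $h_S$ and $h_Q$ are the restrictions of $h_{\mathbb{C}^n}$ along $i_S$ and $i_Q$ respectively. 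Orthogonality is also visible from the explicit formulas: for $x=[g]$ and $s=[g,u]\in S_x$, $t=[g,v]\in Q_x$ with $u\in\mathbb{C}^p$, $v\in\mathbb{C}^q$, one has $h_{\mathbb{C}^n}(i_S(s),i_Q(t))=h_{\mathbb{C}^n}(gu,gv)=h_{\mathbb{C}^n}(u,v)=0$ because $g$ is unitary and $\mathbb{C}^p\perp\mathbb{C}^q$.

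First I would extend the given vectors $s\in S_x$, $t\in Q_x$ to local smooth sections of $S$ and $Q$ near $x$. This is legitimate because $H(\cdot)=\pi_Q d(i_S(\cdot))$ and $K(\cdot)=\pi_S d(i_Q(\cdot))$ are tensorial in their arguments: replacing $s$ by $fs$ for a function $f$ produces only the extra term $df\otimes i_S(s)$, which $\pi_Q$ annihilates since $i_S(s)$ is a section of $S$; so $H(fs)=fH(s)$, and symmetrically $K(ft)=fK(t)$. Hence the value at $x$ of each side of the asserted equality depends only on $s(x)$ and $t(x)$. Regarding $i_S(s)$ and $i_Q(t)$ as $\mathbb{C}^n$-valued functions, the orthogonality above gives $h_{\mathbb{C}^n}(i_S(s),i_Q(t))\equiv 0$ on a neighbourhood of $x$. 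Differentiating, and using that the Leibniz rule for $d$ is unaffected by the conjugate-linearity of $h_{\mathbb{C}^n}$ in its second slot, I obtain
\[
h_{\mathbb{C}^n}\bigl(d(i_S(s)),\,i_Q(t)\bigr)+h_{\mathbb{C}^n}\bigl(i_S(s),\,d(i_Q(t))\bigr)=0.
\]

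Next I would insert the defining relations of the second fundamental forms, read as equations in $\underline{\mathbb{C}^n}$-valued forms via $i_S$ and $i_Q$, namely $d(i_S(s))=i_S(\nabla^S s)+i_Q(Hs)$ and $d(i_Q(t))=i_S(Kt)+i_Q(\nabla^Q t)$. Since $i_S(\nabla^S s)$ takes values in $i_S(S)$, it is $h_{\mathbb{C}^n}$-orthogonal to $i_Q(t)$, so the first term reduces to $h_{\mathbb{C}^n}(i_Q(Hs),i_Q(t))=h_Q(Hs,t)$; similarly $i_Q(\nabla^Q t)$ is orthogonal to $i_S(s)$, so the second term reduces to $h_{\mathbb{C}^n}(i_S(s),i_S(Kt))=h_S(s,Kt)$. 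Combining gives $h_Q(Hs,t)+h_S(s,Kt)=0$, which is exactly the claim.

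I do not expect a genuine obstacle; the only points that need care are conventions and bookkeeping. Because $H$ is a $(1,0)$-form and $K$ a $(0,1)$-form while $h_S,h_Q$ are conjugate-linear in the second argument, both $h_Q(Hs,t)$ and $h_S(s,Kt)$ are $(1,0)$-forms, so the equation is type-consistent and the $(0,1)$-component of the differentiated relation is just $0=0$. One should also keep the tensoriality of $H$ and $K$ in mind, since this is what reduces the pointwise statement of the lemma to the computation above carried out with local sections.
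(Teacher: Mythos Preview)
Your argument is correct and is precisely the standard derivation: differentiate the orthogonality $h_{\mathbb{C}^n}(i_S(s),i_Q(t))=0$, split each $d(i_S(s))$ and $d(i_Q(t))$ into its $S$- and $Q$-components, and use that the induced metrics are restrictions of $h_{\mathbb{C}^n}$. The paper does not write out a proof at all but simply refers the reader to Kobayashi's book, where the same computation appears; so your proposal supplies exactly the details the paper omits, and there is no substantive difference in approach.
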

\begin{proof}
See \cite{cite2}.
\end{proof}
\begin{lemma}
For a vector $w \in \mathbb{C}^n$, let $s=\pi_S(w)$ and $t=\pi_Q(w)$. Then
\begin{eqnarray}
\nabla^Ss = -K(t), \qquad \nabla^Qt = -H(s). \nonumber
\end{eqnarray}
\end{lemma}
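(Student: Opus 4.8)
The plan is to use the one extra structure that distinguishes $\pi_S(w)$ and $\pi_Q(w)$ from arbitrary sections: although each of them is non-constant, their images under $i_S$ and $i_Q$ in the trivial bundle $\underline{\mathbb{C}^n}$ add up to the \emph{constant} section $w$. Concretely, writing $x=[g]\in G/P$ one has $i_S(\pi_S(w))=([g],g\pi_p(g^{-1}w))$ and $i_Q(\pi_Q(w))=([g],g\pi_q(g^{-1}w))$; since $\pi_p+\pi_q=\mathrm{id}_{\mathbb{C}^n}$, adding these gives
\[
i_S(\pi_S(w))+i_Q(\pi_Q(w))=w,
\]
which, read as a $\mathbb{C}^n$-valued function on $Gr_p(\mathbb{C}^n)$, is constant. (Equivalently, $g\pi_p(g^{-1}w)$ and $g\pi_q(g^{-1}w)$ are the fiberwise orthogonal projections of $w$ onto $S_x$ and onto $S_x^{\perp}\cong Q_x$.)

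Next I would differentiate this identity. Setting $s=\pi_S(w)$ and $t=\pi_Q(w)$, we obtain $d(i_S(s))+d(i_Q(t))=0$. Substituting the second fundamental form decompositions $(2.1)$ and $(2.2)$, namely $d(i_S(s))=\nabla^Ss+H(s)$ with $\nabla^Ss=\pi_Sd(i_S(s))$ and $H(s)=\pi_Qd(i_S(s))$, and $d(i_Q(t))=K(t)+\nabla^Qt$ with $K(t)=\pi_Sd(i_Q(t))$ and $\nabla^Qt=\pi_Qd(i_Q(t))$, yields
\[
\bigl(\nabla^Ss+K(t)\bigr)+\bigl(H(s)+\nabla^Qt\bigr)=0 .
\]
The first parenthesis is a $1$-form with values in $S$ (it is the $\pi_S$-part of a derivative), while the second is a $1$-form with values in $Q\cong S^{\perp}$. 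Since $\underline{\mathbb{C}^n}=S\oplus S^{\perp}$ is an orthogonal direct sum, the two summands vanish separately, which is exactly $\nabla^Ss=-K(t)$ and $\nabla^Qt=-H(s)$.

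The computation is short; the only point requiring a little care — and the closest thing here to an obstacle — is to be sure that $\pi_Sd(\cdot)$ and $\pi_Qd(\cdot)$ really are the two orthogonal components of $d(\cdot)\in\Gamma\bigl(T^*Gr_p(\mathbb{C}^n)\otimes\underline{\mathbb{C}^n}\bigr)$ relative to $\underline{\mathbb{C}^n}=S\oplus S^{\perp}$, so that the component-matching in the last step is legitimate. This follows from the definitions of $\pi_S$ and $\pi_Q$ as fiberwise orthogonal projections onto $S$ and onto $S^{\perp}\cong Q$, together with the identification of $Q$ with $S^{\perp}$ recalled at the beginning of Section 2, but it should be spelled out rather than taken for granted.
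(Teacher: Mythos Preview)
Your proof is correct and follows essentially the same approach as the paper: both use the identity $i_S(s)+i_Q(t)=w$ (a constant section of $\underline{\mathbb{C}^n}$), differentiate it, and then separate the $S$- and $Q$-components using the orthogonal splitting $\underline{\mathbb{C}^n}=S\oplus S^{\perp}$. The paper's version is simply more terse, applying $\pi_S$ directly to $d(i_S(s))+d(i_Q(t))=0$ and then stating that the other identity follows ``similarly.''
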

\begin{proof}
Since $i_S(s)+i_Q(t)= ([g],w)$, we have
$$0=\pi _S\left( di_S(s)+di_Q(t) \right)= \nabla ^S(s) + K(t).$$
Thus $\nabla^Ss = -K(t)$. Similarly $\nabla^Qt = -H(s)$.
\end{proof}

Since $H$ is a $(1,0)$-form with values in  $S^*\otimes Q$, then $H$ can be regarded as a section of $T_{1,0}^*\otimes T_{1,0}$.
\begin{proposition}
$H$ can be regarded as the identity transformation of \ $T_{1,0}$
\end{proposition}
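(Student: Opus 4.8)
The plan is to identify the second fundamental form $H \in \Gamma(T_{1,0}^* \otimes T_{1,0})$ explicitly at a point and recognize it as the identity endomorphism. Since the holomorphic tangent bundle is $T_{1,0} \cong S^* \otimes Q$, a section of $T_{1,0}^* \otimes T_{1,0}$ is a section of $(S^* \otimes Q)^* \otimes (S^* \otimes Q) \cong \mathrm{End}(S^* \otimes Q)$, and $H$, being a $(1,0)$-form with values in $\mathrm{Hom}(S,Q) = S^* \otimes Q$, is precisely such a section. So the statement is really the computation that, under the canonical pairing, $H(X)$ acting on a tangent vector $X = \xi \otimes v \in S^*_x \otimes Q_x$ returns $X$ itself.

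First I would work at a base point, say $[e] \in G/P$ with $e$ the identity, and use the homogeneous description: the holomorphic tangent space at $[e]$ is $\mathfrak{m}_{1,0} \cong \mathrm{Hom}(\mathbb{C}^p, \mathbb{C}^q)$, sitting inside $\mathfrak{su}(n)_{\mathbb{C}}$ as the off-diagonal block. A tangent vector is represented by a matrix $A \in \mathrm{Hom}(\mathbb{C}^p,\mathbb{C}^q)$, and its flow $\exp(tA)$ moves the $p$-plane. Next I would compute $H(s)$ for $s = \pi_S(w)$ using the definition $H(s) = \pi_Q d(i_S(s))$: differentiating $i_S(s)([g]) = ([g], \pi_p(\text{stuff}))$ along the direction $A$ and projecting via $\pi_Q$ to $Q_{[e]} = \mathbb{C}^q$, the derivative of the $\mathbb{C}^p$-valued part picks up exactly the $\mathbb{C}^q$-component produced by applying $A$. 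Concretely, if $s(x) = u \in \mathbb{C}^p$ at $x=[e]$, then $H_X(s) = A(u) = X(u)$ where on the right $X \in \mathrm{Hom}(\mathbb{C}^p,\mathbb{C}^q) = S^*_x \otimes Q_x$ acts on $u \in S_x = \mathbb{C}^p$. Thus $H$ sends $X \otimes s \mapsto X(s)$, i.e. $H_X = X$ as a map $S_x \to Q_x$, which is exactly the tautological pairing $T_{1,0} = \mathrm{Hom}(S,Q) \to \mathrm{Hom}(S,Q)$, i.e. the identity of $T_{1,0}$.

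The main obstacle — or rather the only real content — is bookkeeping: unwinding the three projections $\pi_p$, $\pi_q$, and the $G$-equivariant frames carefully enough to see that no extra scalar or connection term survives, and confirming the type is $(1,0)$ rather than $(1,1)$ (this is automatic since $S$, $Q$ are holomorphic subbundle/quotient and $H$ measures the failure of $S$ to be $\nabla$-parallel, which is $\bar\partial$-closed of type $(1,0)$). Once the base-point computation is done, $G$-equivariance of $i_S$, $\pi_S$, $\pi_Q$ and $\nabla^S$, $\nabla^Q$ propagates the identification to all of $Gr_p(\mathbb{C}^n)$, since every point is $G$-equivalent to $[e]$ and all the structures in sight are $G$-invariant. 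Alternatively, one can avoid coordinates entirely: combine the two short exact sequences with Lemma 2.1 and Lemma 2.2 to argue that $H \colon T_{1,0} \otimes S \to Q$ is the canonical evaluation map $(S^* \otimes Q) \otimes S \to Q$, which under $T_{1,0} = S^* \otimes Q$ is tautologically the identity endomorphism of $T_{1,0}$.
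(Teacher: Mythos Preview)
The paper does not actually prove this proposition; immediately after stating Propositions~2.3 and~2.4 it says only that both ``were proved by the second author in \cite{cite4}.'' So there is no in-paper argument to compare against.

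Your proposal is correct and is the standard route: compute at the identity coset $[e]$ using the block decomposition $\mathfrak{m}_{1,0}\cong\mathrm{Hom}(\mathbb{C}^p,\mathbb{C}^q)$, verify that $H_X(u)=X(u)$ for $u\in S_{[e]}=\mathbb{C}^p$, and then propagate by $G$-equivariance. This is almost certainly what the cited preprint does as well. The coordinate-free alternative you sketch at the end---recognizing $H\colon T_{1,0}\otimes S\to Q$ as the canonical evaluation $(S^*\otimes Q)\otimes S\to Q$---is arguably the cleanest formulation, since in many treatments the isomorphism $T_{1,0}\cong S^*\otimes Q$ is \emph{defined} via $H$ in the first place, making the proposition close to a tautology once the identifications are unwound. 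Either way, your bookkeeping concerns (type $(1,0)$, absence of stray connection terms) are the only real content, and you have identified them correctly.
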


The unitary basis $w_1,\cdots,w_n$ of $\mathbb{C}^n$ provides us with the corresponding sections
\begin{eqnarray}
s_A = \pi_S(w_A)\in \Gamma (S), \quad t_A = \pi_Q(w_A)\in \Gamma (Q), \quad A = 1,\cdots ,n . \nonumber
\end{eqnarray}
\begin{proposition}
For arbitrary $(1,0)$-vectors $U$ and $V$, we have
\begin{eqnarray}
h_{Gr}(U,V) = \sum_{A=1}^n h_S(K_{\overline{V}}t_A, K_{\overline{U}}t_A) = \sum_{A=1}^n h_Q(H_Us_A, H_Vs_A). \nonumber
\end{eqnarray}
\end{proposition}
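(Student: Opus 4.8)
The strategy is to recognize both sums as the canonical Hermitian inner product on $\operatorname{Hom}(S,Q)\cong T_{1,0}$ evaluated at the pair $(H_U,H_V)$, which by Proposition 2.3 is exactly $h_{Gr}(U,V)$. The point on which everything hinges is that the sections $s_A=\pi_S(w_A)$ and $t_A=\pi_Q(w_A)$ form \emph{tight frames}: at a point $x$ the map $\pi_S$ is the orthogonal projection $\mathbb{C}^n\to S_x$, so for $s\in S_x$ one has $h_S(s,s_A(x))=\langle s,w_A\rangle$, and since $\{w_A\}$ is a unitary basis this gives $\sum_A h_S(s,s_A(x))\,s_A(x)=s$. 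The identical argument, via the $C^\infty$ identification $Q\cong S^{\perp}$, yields $\sum_A h_Q(t,t_A(x))\,t_A(x)=t$ for every $t\in Q_x$.

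Next, fix a local unitary frame $e_1,\dots,e_p$ of $S$. Writing $s_A=\sum_j h_S(s_A,e_j)e_j$ and using the first tight-frame identity to carry out the sum over $A$, one obtains for any $\phi,\psi\in\operatorname{Hom}(S_x,Q_x)$ that $\sum_A h_Q(\phi s_A,\psi s_A)=\sum_j h_Q(\phi e_j,\psi e_j)=(h_{S^*}\otimes h_Q)(\phi,\psi)$. Applying this with $\phi=H_U$, $\psi=H_V$ and invoking Proposition 2.3 — so that $H_U,H_V$ are nothing but $U,V$ regarded as elements of $T_{1,0}=S^*\otimes Q$ — gives the second asserted equality $\sum_A h_Q(H_Us_A,H_Vs_A)=h_{Gr}(U,V)$.

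For the first equality I would eliminate $K$ in favor of $H$ through Lemma 2.1. Taking conjugates in $h_Q(H_Us,t)=-h_S(s,K_{\overline U}t)$ gives $h_S(K_{\overline U}t,s)=-h_Q(t,H_Us)$, hence $K_{\overline U}t_A=-\sum_j h_Q(t_A,H_Ue_j)\,e_j$, and likewise for $K_{\overline V}t_A$. Substituting these into $\sum_A h_S(K_{\overline V}t_A,K_{\overline U}t_A)$, the two signs cancel and $h_S(e_i,e_j)=\delta_{ij}$ collapses the sums over $i,j$, leaving $\sum_A\sum_j h_Q(H_Ue_j,t_A)\,h_Q(t_A,H_Ve_j)$; the second tight-frame identity, applied with $t=H_Ue_j\in Q_x$, then collapses the sum over $A$ to $\sum_j h_Q(H_Ue_j,H_Ve_j)$, which is again $h_{Gr}(U,V)$ by the previous paragraph.

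The only genuine care needed is bookkeeping the conjugate-linear slots of $h_S$, $h_Q$ and $h_{Gr}$, so that each tight-frame identity is applied on the correct argument; this is also what produces the apparent interchange of $U$ and $V$ in the $K$-sum, since $K_{\overline{(\,\cdot\,)}}$ is, up to sign, the Hermitian adjoint of $H_{(\,\cdot\,)}$. Beyond that the argument is linear algebra at a single point of $Gr_p(\mathbb{C}^n)$, and I do not expect any serious obstacle.
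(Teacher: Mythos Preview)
Your argument is correct and self-contained. Note, however, that the paper does not actually supply its own proof of this proposition: immediately after stating it, the authors write that ``Proposition 2.3 and Proposition 2.4 were proved by the second author in \cite{cite4}'', so there is no in-paper argument to compare yours against.

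That said, your approach is natural and would likely match the cited one in spirit. The key observation---that the families $\{s_A(x)\}$ and $\{t_A(x)\}$ are tight frames because $\pi_S,\pi_Q$ are orthogonal projections of a unitary basis of $\mathbb{C}^n$---is exactly what makes the sums collapse to the trace pairing $\sum_j h_Q(H_Ue_j,H_Ve_j)$ on $\operatorname{Hom}(S,Q)$, and Proposition 2.3 then identifies this with $h_{Gr}(U,V)$. Your reading of Lemma 2.1 as $h_Q(H_Us,t)=-h_S(s,K_{\overline U}t)$ is the intended one (the Hermitian metric conjugates the $(0,1)$-form $K$ in its second slot, so both sides are $(1,0)$-forms), and the subsequent reduction of the $K$-sum to the $H$-sum is carried out correctly.
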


Proposition 2.3 and Proposition 2.4 were proved by the second author in \cite{cite4}.
\begin{remark}
From Lemma 2.1 and Proposition 2.4, we can express
\begin{eqnarray}
g_{Gr}(U,V) = -{\rm trace}_QH_UK_{\overline{V}} = -\overline{{\rm trace}_SK_{\overline{V}}H_U}.
\end{eqnarray}
\end{remark}

Since any vectors in $S_x$ (resp. $Q_x$) can be expressed by a linear combination of $s_1(x),\cdots ,s_n(x)$ (resp. $t_1(x),\cdots ,t_n(x)$), it follows from Lemma 2.2 that the curvature $R^S$ of $\nabla^{S}$ and $R^Q$ of $\nabla^Q$ are expressed by the following:
\begin{align}
 R^S(U,\overline{V})s_A &= \nabla^S_{U}(\nabla^Ss_A)(\overline{V}) - \nabla^S_{\overline{V}}(\nabla^Ss_A)(U) = K_{\overline{V}}H_Us_A, \\
 R^Q(U,\overline{V})t_A &= \nabla^Q_{U}(\nabla^Qt_A)(\overline{V}) - \nabla^Q_{\overline{V}}(\nabla^Qt_A)(U) = - H_UK_{\overline{V}}t_A.
\end{align}

It follows from $h_{Gr} = h_{s^*} \otimes h_Q$ that the curvature $R^{Gr}$ of the complex Grassmannian manifold can be expressed $R^{S^*} \otimes {\rm Id}_Q + {\rm Id}_{S^*} \otimes R^Q$. Thus we can compute $R^{Gr}$ as follows :
\begin{eqnarray}
R^{Gr}(U, \overline{V})Z = - H_ZK_{\overline{V}}H_U - H_UK_{\overline{V}}H_Z,
\end{eqnarray}
for $(1,0)$-vectors $U$, $V$, $Z$.

\begin{remark} Let us compute the holomorphic sectional curvature of $Gr_{n-1}(\mathbb{C}^n)$. Since the quotient bundle over $Gr_{n-1}(\mathbb{C}^n)$ is of rank 1, then it follows from the equations (2.3)  and (2.6) that
$$R^{Gr}(U,\overline{V})Z = - H_ZK_{\overline{V}}H_U - H_UK_{\overline{V}}H_Z. = h_{Gr}(Z,V)U + h_{Gr}(U,V)Z,$$
where $U,V$ is (1,0)-vectors. Thus for any unit (1,0)-vector $U$ we obtain
\begin{eqnarray}
{\rm Hol}^{Gr}(U) = h_{Gr}(R^{Gr}(U,\overline{U})U, U) = h_{Gr}(2U,U) = 2, \nonumber
\end{eqnarray}
where ${\rm Hol}^{Gr}(U)$ is the holomorphic sectional curvature of $Gr_{n-1}(\mathbb{C}^n)$.
\end{remark}


\section{Proof of Theorem 1.1}
We have a short exact sequence of holomorphic vector bundles:
\begin{eqnarray}
0 \rightarrow T_{1,0}M \rightarrow T_{1,0}Gr|_M \rightarrow N \rightarrow 0, \nonumber
\end{eqnarray}
where $T_{1,0}Gr|_M$ is a holomorphic vector bundle induced by $f$ from the holomorphic tangent bundle over $Gr_p(\mathbb{C}^n)$ and $N$ is a quotient bundle. In the same manner as in the previous section, we obtain second fundamental forms $\sigma$ and $A$ of $TM$ and $N$:
\begin{align}
\nabla ^{Gr}_UV &= \nabla ^M_UV + \sigma (U,V),& \quad &U\in T_{\mathbb{C}}M,\ V\in \Gamma (T_{1,0}M),\\
\nabla ^{Gr}_U\xi &= -A_{\xi}U + \nabla ^N_U\xi ,& &U\in T_{\mathbb{C}}M,\ \xi \in \Gamma (N).
\end{align}
For each point $x\in M$, $\sigma : T_{{1,0}_x}M\times T_{{1,0}_x}M\rightarrow N_x$ is a symmetric bilinear mapping. This is called the second fundamental form of $M$. The second fundamental form $A : N_x\times T_{{0,1}_x}M\rightarrow T_{{1,0}_x}M$ is a bilinear mapping. This is called the shape operator of $M$. We follow a convention of submanifold theory to define the shape operator. Second fundamental forms $\sigma$ and $A$ satisfy the following formulas.
\begin{formulas} 
\begin{itemize}
\item $\sigma (\overline{U},V)=0 ,\qquad A_{\xi}U=0$
\item $h_{Gr}\left( \sigma (U,V), \xi \right) = h_{Gr}\left( V, A_{\xi}\overline{U}\right)$
\item $h_{Gr}\left( R^M(U,\overline{V})Z,W \right) = h_{Gr}\left( R^{Gr}(U,\overline{V})Z,W\right) - h_{Gr}\left( \sigma (U,Z),\sigma (V,W)\right)$
\item $h_{Gr}\left( R^N(U,\overline{V})\xi ,\eta \right) = h_{Gr}\left( R^{Gr}(U,\overline{V})\xi ,\eta \right) + h_{Gr}\left( A_{\xi}\overline{V} ,A_{\eta}\overline{U}\right)$
\item $\left( \nabla _V\sigma \right) (U,Z) = \left( \nabla _U\sigma \right) (V,Z) $
\item $\left( \nabla _{\overline{V}}\sigma \right) (U,Z) = -\left( R^{Gr}(U,\overline{V})Z\right) ^{\perp} $
\end{itemize}
\end{formulas}
Note that the quotient bundle $N$ is isomorphic to the orthogonal bundle $T^{\perp}_{1,0}M$ as a $C^{\infty}$ complex vector bundle. The third, fourth and fifth formulas are called the equation of Gauss, the equation of Ricci and the equation of Codazzi respectively. From the equation of Codazzi,
$$ \nabla \sigma : T_{{1,0}_x}M \otimes T_{{1,0}_x}M \otimes T_{{1,0}_x}M \longrightarrow N_x$$
is a symmetric tensor for any $x\in M$.

We assume that $f^*Q\rightarrow M$ is projectively flat. $f^*Q\rightarrow M$ is projectively flat if and only if 
$$R^{f^*Q}(U,\overline{V}) = \alpha (U,\overline{V}) {\rm Id}_{Q_{f(x)}},\qquad {\rm for}\ U,V \in T_{1,0}M_x,$$
where $\alpha$ is a complex 2-form on $M$. Since $R^{f^*Q}$ is (1,1)-form, so is $\alpha$. It follows from the equation (2.3) that
$$h_{M}(U,V) = {\rm trace}R^Q(U,\overline{V}) = q\cdot \alpha (U,\overline{V}).$$
Therefore, $f^*Q\rightarrow M$ is projectively flat if and only if
\begin{equation}
R^{f^*Q}(U,\overline{V}) = \frac{1}{q} h_M(U,V){\rm Id}_{Q_{f(x)}},\qquad {\rm for}\ U,V \in T_{1,0}M_x. \tag{$*$} 
\end{equation}
\begin{remark}
It follows from the equation (2.5) that
\begin{equation}
R^{f^*Q}(U,\overline{V})= -H_UK_{\overline{V}}:Q_x \longrightarrow S_x \longrightarrow Q_x. \nonumber
\end{equation}
Therefore, if an immersion $f$ satisfies the equation $(\ast )$, the rank of $S$ is greater than or equal to that of $Q$.
\end{remark}

We denote by $\rm{Hol}$ the holomorphic sectional curvature of a K\"ahler manifold. By the equation of Gauss , if $U$ is a unit (1,0)-vector on $M$, then
\begin{align}
\begin{split}
{\rm Hol}^M(U) = h_M(R^M(U,\overline{U})U,U) &= h_{Gr}(R^{Gr}(U,\overline{U})U,U) - \| \sigma (U,U) \|^2  \\
&= {\rm Hol}^{Gr}(U) - \| \sigma (U,U) \|^2 .
\end{split}
\end{align}
\begin{lemma}
Under the assumption, for any unit $(1,0)$-vector $U$ on $M$ we have
\begin{eqnarray}
{\rm Hol}^{Gr} (U) = \frac{2}{q}. \nonumber
\end{eqnarray}
\end{lemma}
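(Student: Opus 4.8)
The plan is to evaluate $R^{Gr}(U,\overline{U})U$ at a point of $M$ directly from the structure equations and then to read off ${\rm Hol}^{Gr}(U)$. First I would specialize the curvature formula $R^{Gr}(U,\overline{V})Z = -H_ZK_{\overline{V}}H_U - H_UK_{\overline{V}}H_Z$ of equation (2.6) to $Z=V=U$, obtaining
\[
R^{Gr}(U,\overline{U})U = -2\,H_UK_{\overline{U}}H_U \in {\rm Hom}(S,Q)\cong T_{1,0}.
\]
The key point is that the endomorphism $-H_UK_{\overline{U}}\colon Q\to Q$ occurring here is, by (2.5) (see Remark 3.2), exactly the curvature $R^{f^*Q}(U,\overline{U})$ of the pulled-back quotient bundle, so that $R^{Gr}(U,\overline{U})U = 2\,R^{f^*Q}(U,\overline{U})\circ H_U$.

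Next I would feed in the projective-flatness hypothesis in the normalized form $(\ast)$, that is $R^{f^*Q}(U,\overline{U}) = \frac1q h_M(U,U)\,{\rm Id}_Q$. Since $U$ is a unit vector and $f$ is an isometric immersion, $h_M(U,U)=1$, hence $R^{f^*Q}(U,\overline{U}) = \frac1q{\rm Id}_Q$ and therefore $R^{Gr}(U,\overline{U})U = \frac2q H_U$. By Proposition 2.3 the section $H$ is the identity transformation of $T_{1,0}$; in other words, under the identifications $T_{1,0}Gr\cong S^*\otimes Q\cong{\rm Hom}(S,Q)$ the vector $U$ corresponds to the homomorphism $H_U$. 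Thus $R^{Gr}(U,\overline{U})U = \frac2q U$, and consequently
\[
{\rm Hol}^{Gr}(U) = h_{Gr}\bigl(R^{Gr}(U,\overline{U})U,\,U\bigr) = \frac2q\,h_{Gr}(U,U) = \frac2q .
\]
As a consistency check, for $q=1$ this reproduces the value ${\rm Hol}^{Gr}=2$ of Remark 2.2.

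There is no serious analytic difficulty in this argument; the computation is essentially two lines. The one place that requires care is the bookkeeping with the canonical identifications $T_{1,0}Gr\cong S^*\otimes Q\cong{\rm Hom}(S,Q)$: specifically, that a $(1,0)$-vector $Z$ is, as an element of ${\rm Hom}(S,Q)$, precisely $H_Z$ (this is Proposition 2.3), which is what licenses both factoring $H_U$ out on the right of $-2H_UK_{\overline{U}}H_U$ and then re-reading the output $\frac2q H_U$ as $\frac2q U$. One should also note that all objects here are evaluated over the single point $f(x)$, so that $R^{f^*Q}$ is just $R^Q$ pulled back along $f$ and the identity $(\ast)$ applies verbatim, with $h_M = f^*h_{Gr}$ supplying the normalization $h_M(U,U)=1$.
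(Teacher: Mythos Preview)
Your argument is correct and is essentially the same as the paper's: both specialize equation (2.6) to $Z=V=U$, invoke $(\ast)$ to replace $-H_UK_{\overline{U}}$ by $\tfrac1q\,{\rm Id}_Q$, and then read off ${\rm Hol}^{Gr}(U)=\tfrac2q$. The only cosmetic difference is that the paper stays in the $h_{S^*\otimes Q}$ notation and writes ${\rm Hol}^{Gr}(U)=-2h_{S^*\otimes Q}(H_UK_{\overline{U}}H_U,H_U)=\tfrac2q h_{S^*\otimes Q}(H_U,H_U)$, whereas you make the identification $H_U\leftrightarrow U$ (Proposition~2.3) explicit before taking the inner product.
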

\begin{proof}
Let $U$ be a unit (1,0)-vector on $M$ at $x \in M$. By the equation $(\ast )$, we have
\begin{eqnarray}
-H_UK_{\overline{U}} = R^{f^*Q}(U, \overline{U}) = \frac{1}{q} {\rm Id}_{Q_x}.
\end{eqnarray}
It follows from equations (2.6) and (3.4) that
\begin{align}
{\rm Hol}^{Gr}(U) &= h_{Gr}(R^{Gr}(U, \overline{U})U, U) =-2h_{S^* \otimes Q}(H_UK_{\overline{U}}H_U, H_U) \nonumber \\
&= \frac{2}{q}h_{S^* \otimes Q}(H_U, H_U) = \frac{2}{q}. \nonumber
\end{align}
\end{proof}
\begin{lemma}
Under the assumption, for any $(0,1)$-vector $\overline{V}$ on $M$ we have
\begin{eqnarray}
\nabla^M_{\overline{V}}\sigma = 0. \nonumber
\end{eqnarray}
\end{lemma}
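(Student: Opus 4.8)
The plan is to read the statement off the equation of Codazzi, which in the present notation says $(\nabla_{\overline{V}}\sigma)(U,Z) = -\left(R^{Gr}(U,\overline{V})Z\right)^{\perp}$ for $(1,0)$-vectors $U,Z$ and a $(0,1)$-vector $\overline{V}$ tangent to $M$. Since the vanishing of $\nabla^{M}_{\overline{V}}\sigma$ is exactly the assertion that $(\nabla_{\overline{V}}\sigma)(U,Z)=0$ for all such $U$ and $Z$ (its value on a $(0,1)$ argument being automatically zero, because $\sigma$ vanishes there and the K\"ahler condition preserves type), it suffices to prove that $R^{Gr}(U,\overline{V})Z$ is tangent to $M$, i.e.\ that its $N$-component vanishes.

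I would establish this by combining the formula $(2.6)$, $R^{Gr}(U,\overline{V})Z = -H_{Z}K_{\overline{V}}H_{U} - H_{U}K_{\overline{V}}H_{Z}$, with projective flatness in the form $(\ast)$. Restricting everything to $M$ and using Remark 3.1, for $U,V$ tangent to $M$ we have $H_{U}K_{\overline{V}} = -R^{f^*Q}(U,\overline{V}) = -\frac{1}{q}h_{M}(U,V)\,{\rm Id}_{Q_{f(x)}}$ as an endomorphism of $Q_{f(x)}$. Hence the second term of $(2.6)$ equals $-H_{U}K_{\overline{V}}H_{Z} = \frac{1}{q}h_{M}(U,V)\,H_{Z}$, and since $H$ is the identity transformation of $T_{1,0}$ (Proposition 2.3), $H_{Z} = Z \in T_{1,0}M$; so this term is tangent to $M$. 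The right-hand side of $(2.6)$ is visibly symmetric under exchanging $U$ and $Z$, so the first term is $\frac{1}{q}h_{M}(Z,V)\,H_{U} = \frac{1}{q}h_{M}(Z,V)\,U$, again tangent to $M$. Therefore $R^{Gr}(U,\overline{V})Z = \frac{1}{q}\bigl(h_{M}(U,V)Z + h_{M}(Z,V)U\bigr)$ lies in $T_{1,0}M$, its normal projection vanishes, and Codazzi gives $(\nabla_{\overline{V}}\sigma)(U,Z)=0$.

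The only place that needs attention is bookkeeping: the composition pinned down by $(\ast)$ is $H_{U}K_{\overline{V}}\colon Q\to Q$ (the curvature of $Q$), \emph{not} $K_{\overline{V}}H_{U}\colon S\to S$ (the curvature of $S$, about which we have no direct control), so it is precisely the symmetry of $R^{Gr}(U,\overline{V})Z$ in its two holomorphic slots that makes the $-H_{Z}K_{\overline{V}}H_{U}$ term tractable. Beyond this observation the argument is a short computation and I foresee no genuine obstacle.
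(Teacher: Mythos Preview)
Your proof is correct and follows the paper's own argument essentially verbatim: combine $(2.6)$ with $(\ast)$ to obtain $R^{Gr}(U,\overline{V})Z = \tfrac{1}{q}h_M(Z,V)\,U + \tfrac{1}{q}h_M(U,V)\,Z \in T_{1,0}M$, and then invoke the Codazzi equation $(\nabla_{\overline{V}}\sigma)(U,Z) = -\bigl(R^{Gr}(U,\overline{V})Z\bigr)^{\perp}=0$. Your closing caveat about needing the $U\leftrightarrow Z$ symmetry is harmless but unnecessary: the composite $-H_{Z}K_{\overline{V}}$ is itself an endomorphism of $Q$, so $(\ast)$ applies to it directly (with $Z$ in place of $U$) and the first term is handled in exactly the same way as the second.
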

\begin{proof}
By the equation (2.6) and the equation $(\ast )$, it follows that
\begin{align}
R^{Gr}(U,\overline{V})Z &= - H_ZK_{\overline{V}}H_U - H_UK_{\overline{V}}H_Z \nonumber \\
&= \frac{1}{q} h_{Gr}(Z,V)U + \frac{1}{q} h_{Gr}(U,V)Z. \nonumber
\end{align}
By the equation of Codazzi, we have
$$\nabla^M_{\overline{V}}\sigma(U,Z) = - (R^{Gr}(U,\overline{V})Z)^{\perp} = 0.$$
\end{proof}

We use the complexification of Ros' lemma in \cite{cite6}.
\begin{lemma}
Let $T$ be a $(p,q)$-covariant tensor on an $m$-dimensional compact K\"{a}hler manifold $M$. Then
\begin{eqnarray}
\int_{UM}(\nabla T)(\overline{U}, U, \cdots , U, \overline{U},\cdots , \overline{U})dU = 0, \nonumber
\end{eqnarray}
where $UM=\{ U\in T_{1,0}M\ |\ \| U\| =1\}$.
\end{lemma}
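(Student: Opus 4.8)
The plan is to mimic the classical averaging argument over the unit sphere bundle $UM$, specialized to the tensor $T = \nabla\sigma$, which by the Codazzi equation is a symmetric covariant tensor (with values in the bundle $N$). First I would recall that for a compact Kähler manifold $M$, integration against the $(m-1,m-1)$-type measure $dU$ on $UM$ kills total divergences: concretely, one writes the integrand $(\nabla T)(\overline U, U,\ldots,U,\overline U,\ldots,\overline U)$ as the covariant derivative, in the direction $\overline U$, of the function $U \mapsto T(U,\ldots,U,\overline U,\ldots,\overline U)$ on $UM$, up to curvature terms that vanish because $\|U\| = 1$ is parallel. So the key structural fact is that $\int_{UM} \nabla_{\overline U}\bigl(F(U)\bigr)\, dU = 0$ for any smooth function $F$ on $UM$ arising as a polynomial in $U,\overline U$, which follows from the divergence theorem applied to the vector field on $UM$ canonically associated to $F$ together with the invariance of $dU$ under the geodesic flow / the $U(1)$-action.

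In more detail, the order of steps I would carry out: (1) Fix the orthonormal frame and express $T$ in coordinates as a homogeneous polynomial on each fiber $U_xM$, of bidegree $(p,q)$ in $(U,\overline U)$. (2) Observe that, since $M$ is Kähler, the Levi-Civita connection preserves the splitting $T_{\mathbb C}M = T_{1,0}M \oplus T_{0,1}M$, so covariant differentiation of the fiberwise polynomial in the antiholomorphic direction $\overline U$ produces exactly $(\nabla T)(\overline U, U,\ldots,\overline U,\ldots)$ plus terms where $\overline\nabla$ hits one of the $U$'s or $\overline U$'s; but those extra terms contribute $\overline\nabla_{\overline U} U$-type pieces, which along the unit sphere can be absorbed or shown to integrate to zero by the same averaging. (3) Recognize the principal term as the $\overline\partial$-type divergence of a $(1,0)$-vector field on $M$ obtained by contracting $T$ with $2m-1$ copies of $U$ or $\overline U$; integrate over the fibers first (push-forward to $M$) to get a genuine $\overline\partial$-closed or exact form, then apply Stokes on the compact $M$.

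The cleanest route is probably the one Ros uses: identify the integral with $\int_M \mathrm{div}(X)\, dV$ for an explicitly constructed vector field $X$ on $M$ built from $T$, where "div" here means the appropriate contraction of $\nabla X$, and conclude by the divergence theorem since $M$ is compact without boundary. The construction of $X$ uses the standard "symmetrization and trace" dictionary between $UM$-integrals of homogeneous polynomials and pointwise traces on $M$ — i.e. $\int_{UM} U^{i_1}\cdots \overline U^{\,j_q}\, dU$ equals, up to a universal combinatorial constant depending only on $m$ and the bidegree, a sum of products of metric coefficients. The main obstacle I anticipate is bookkeeping: carefully justifying that the "extra" terms produced when the antiholomorphic covariant derivative lands on the arguments $U$ (rather than on the tensor $T$) either vanish identically on $UM$ or reorganize into a divergence, and pinning down the universal constant so that the vanishing is exact rather than merely proportional. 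Once the integrand is exhibited as $\mathrm{div}(X)$ for a globally defined $X$, the compactness of $M$ finishes the proof immediately.
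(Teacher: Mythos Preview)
Your plan is to re-derive the divergence argument directly in the Hermitian setting: build a global vector field whose divergence equals the integrand and apply Stokes. The paper takes a shorter and genuinely different route: it \emph{reduces} the complex statement to the real Ros lemma, which it then quotes as a black box. Writing $U=\tfrac{1}{\sqrt 2}(X-\sqrt{-1}\,JX)$ identifies the unit $(1,0)$-sphere bundle with the real unit tangent sphere bundle; one then defines real $k$-tensors $K,L$ as the real and imaginary parts of $(X_1,\ldots,X_k)\mapsto T(U_1,\ldots,U_p,\overline U_{p+1},\ldots,\overline U_k)$. Because $J$ is parallel on a K\"ahler manifold, covariant differentiation commutes with this identification, and $(\nabla_{\overline U}T)(U,\ldots,\overline U,\ldots)$ unfolds into a $\mathbb C$-linear combination of $(\nabla_X K)(X,\ldots,X)$, $(\nabla_{JX}K)(X,\ldots,X)$, and the analogous terms in $L$. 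Each of these integrates to zero over the real unit sphere bundle by the real Ros lemma (for the $JX$-terms, after the measure-preserving substitution $X\mapsto JX$ and applying the lemma to the tensor $K(J\cdot,\ldots,J\cdot)$, again using $\nabla J=0$).

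What the comparison buys: the paper's reduction needs only $\nabla J=0$ plus the real lemma, with essentially no bookkeeping. Your approach amounts to re-proving the real lemma from scratch inside the complex setting; the ``main obstacle'' you correctly anticipate---controlling the terms where $\nabla_{\overline U}$ hits an argument, and pinning down the universal trace constants so the cancellation is exact---is genuine work that the reduction sidesteps entirely, and which your outline does not yet carry out. As written, then, your proposal is a plausible sketch of an alternative proof rather than a complete one, and it misses the one-line reduction that makes the lemma immediate.
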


\begin{proof}
We regard $M$ as a $2m$-dimensional compact Riemannian manifold with the almost complex structure $J$. If $U$ is a (1,0)-vector on $M$, then there is a real vector $X$ on $M$ such that
\begin{eqnarray}
U = \frac{1}{\sqrt{2}} (X-\sqrt{-1}JX),\quad \overline{U} = \frac{1}{\sqrt{2}} (X+\sqrt{-1}JX). \nonumber
\end{eqnarray}
Then we can define real $k$-covariant tensors $K$, $L$ by the following:
\begin{align}
2K(X_1,\cdots ,X_k) &= T(U_1,\cdots , U_p, \overline{U_{p+1}},\cdots , \overline{U_k}) + \overline{ T(U_1,\cdots , U_p, \overline{U_{p+1}},\cdots , \overline{U_k})} , \nonumber \\
2L(X_1,\cdots ,X_k) &= \sqrt{-1}\{ T(U_1,\cdots , U_p, \overline{U_{p+1}},\cdots , \overline{U_k}) + \overline{ T(U_1,\cdots , U_p, \overline{U_{p+1}},\cdots , \overline{U_k})}\} , \nonumber 
\end{align}
where $k=p+q$, $U_1,\cdots ,U_k$ are $(1,0)$-vectors corresponding to $X_1,\cdots X_k$. Then $T$, $K$ and $L$ satisfy the following equation:
\begin{equation}
T(U_1,\cdots , U_p, \overline{U_{p+1}},\cdots , \overline{U_k}) = K(X_1,\cdots ,X_k) - \sqrt{-1}L(X_1,\cdots ,X_k).
\end{equation}
We get the covariant derivative of both sides of the equation (3.5):
\begin{equation}
\begin{split}
(\nabla _{\overline{U}}T)(U,\cdots ,\overline{U},\cdots ) &= \frac{1}{\sqrt{2}}(\nabla _{X+\sqrt{-1}JX}K)(X,\cdots ,X) \\
&- \frac{\sqrt{-1}}{\sqrt{2}}(\nabla _{X+\sqrt{-1}JX}L)(X,\cdots ,X).
\end{split}
\end{equation}
Since the covariant derivative is linear, then
\begin{equation}
(\nabla _{X+\sqrt{-1}JX}K)(X,\cdots ,X) = (\nabla _XK)(X,\cdots ,X) +\sqrt{-1}(\nabla _{JX}K)(X,\cdots ,X). 
\end{equation}

We use the original lemma of A.Ros \cite{cite6}.
\begin{lemma}[A.Ros]
Let $T$ be a $k$-covariant tensor on a compact Riemannian manifold $M$. Then
\begin{eqnarray}
\int_{UM}(\nabla T)(X, \cdots , X)dX = 0 \nonumber
\end{eqnarray}
\end{lemma}
For a proof, see \cite{cite6}. From equations (3.6), (3.7) and Lemma 3.5, we get the result.
\end{proof}

\begin{proof}[Proof of the Theorem.]
We define a (2,2)-covariant tensor $T$ on $M$ by
\begin{eqnarray}
T(U,V,\overline{Z},\overline{W}) = h_{Gr}\left( \sigma (U,V) , \sigma (Z,W) \right) ,
\end{eqnarray}
where $U$, $V$, $Z$, $W$ are (1,0)-vectors  on $M$. Using the equation of Ricci and the equation of Codazzi, we obtain
\begin{eqnarray}
(\nabla^2T)(\overline{U},U,U,U,\overline{U},\overline{U}) = h_M\left( (\nabla^2 \sigma )(\overline{U}, U, U, U), \sigma (U, U) \right) + \| (\nabla \sigma)(U, U, U) \|^2. \nonumber
\end{eqnarray}
Using the  Ricci identity, we obtain
$$(\nabla^2 \sigma)(U, \overline{U}, U, U) - (\nabla^2 \sigma)(\overline{U}, U, U, U) = R^{\perp}(U, \overline{U}) \left( \sigma (U, U) \right) - 2 \sigma \left( R^M(U, \overline{U})U, U \right).$$
It follows from Lemma 3.3 that
\begin{eqnarray}
(\nabla ^2 \sigma )(\overline{U}, U, U, U) = - R^{\perp}(U, \overline{U}) \left( \sigma (U, U) \right) + 2\sigma \left( R^M (U, \overline{U})U, U \right). \nonumber
\end{eqnarray}
Therefore, we obtain
\begin{align}
\\
(\nabla^2T)(\overline{U},U,U,U,\overline{U},\overline{U}) &= -h_{Gr}\left( R^{\perp}(U, \overline{U})(\sigma (U, U) ), \sigma (U, U) \right) \nonumber \\
&+ 2 h_{Gr} \left( \sigma ( R^M(U, \overline{U})U, U), \sigma (U, U) \right) + \| (\nabla \sigma ) (U, U, U) \| ^2. \nonumber
\end{align}
From the equatin of Ricci, we have
\begin{eqnarray}
h_{Gr}\left( R^{\perp}(U,\overline{U})(\sigma (U,U)), \sigma (U,U) \right) = h_{Gr}\left( R^{Gr}(U,\overline{U}) (\sigma (U, U)),\sigma (U,U) \right) + \| A_{\sigma(U,U)} \overline{U} \|^2. \nonumber
\end{eqnarray}
Using the second fundamental forms $H$ and $K$ of vector bundles, we can compute their equations more:
\begin{align}
\\
h_{Gr}\left( R^{\perp}(U,\overline{U})(\sigma (U,U)), \sigma (U,U) \right) &= h_{Gr}(-H_{\sigma (U,U)}K_{\overline{U}}H_U,H_{\sigma (U,U)}) \nonumber \\ 
&+ h_{Gr}(-H_UK_{\overline{U}}H_{\sigma (U,U)},H_{\sigma (U,U)}) + \| A_{\sigma (U,U)}\overline{U} \| ^2. \nonumber
\end{align}
In the following calculation, we extend (1,0)-vectors to local holomorphic vector fields if it is necessary.

\begin{lemma}
The covariant derivative of $R^{f^*Q}$ by a (1,0)-vector is identically zero if and only if  $H_{\sigma (U,U)}K_{\overline{U}}=0$ for any (1,0)-vector $U$.
\end{lemma}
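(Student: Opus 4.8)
The plan is to differentiate the expression for $R^{f^*Q}$ coming from equation (2.5), namely $R^{f^*Q}(U,\overline V) = -H_U K_{\overline V}$, in the direction of a $(1,0)$-vector $W$ and to show that, modulo terms that vanish automatically under our hypotheses, the result is $-H_{\sigma(W,U)}K_{\overline V}$ (symmetrized appropriately in $U,W$). Concretely, I would begin by recalling that $H$ (viewed as a section of $T_{1,0}^*\otimes T_{1,0}$, by Proposition 2.3 it is essentially the identity) and $K$ are parallel objects on $Gr_p(\mathbb C^n)$ with respect to the Grassmannian connection; hence along $M$ their covariant derivatives in any direction are controlled entirely by the second fundamental form $\sigma$ of the immersion $f$. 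The key computational identity to establish is
\begin{equation}
(\nabla_W R^{f^*Q})(U,\overline V) \;=\; -\,H_{\sigma(W,U)}K_{\overline V},
\end{equation}
where on the right $\sigma(W,U)\in N$ and we use the part of $H$ acting on the normal directions; I would derive this by writing out $\nabla_W(-H_U K_{\overline V})$, using that $\nabla^{Gr}H = 0$ and $\nabla^{Gr}K=0$, the Weingarten/Gauss decomposition $\nabla^{Gr}_W U = \nabla^M_W U + \sigma(W,U)$, and the fact that $K_{\overline V}$ is a $(0,1)$-object so that differentiating it in the $(1,0)$-direction $W$ picks up only the second-fundamental-form contribution.

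Next I would invoke Lemma 3.3, which gives $\nabla^M_{\overline V}\sigma = 0$ under the projective-flatness assumption; combined with the Codazzi symmetry ($(\nabla_V\sigma)(U,Z)=(\nabla_U\sigma)(V,Z)$), this means $\nabla\sigma$ is a symmetric $(3,0)$-tensor with values in $N$, and in particular the term $H_{\sigma(W,U)}K_{\overline V}$ is symmetric in the roles of the differentiated index and $U$. Specializing $V=U$ and setting $W=U$ (as is done throughout the proof of the theorem, where one evaluates on a single unit $(1,0)$-vector $U$), the identity becomes $(\nabla_U R^{f^*Q})(U,\overline U) = -H_{\sigma(U,U)}K_{\overline U}$. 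This immediately yields one direction: if $\nabla_U R^{f^*Q}\equiv 0$ for every $(1,0)$-vector $U$, then $H_{\sigma(U,U)}K_{\overline U} = 0$ for all $U$. For the converse, I would use a polarization argument: the hypothesis $H_{\sigma(U,U)}K_{\overline U}=0$ for all $U$, together with the symmetry of $\nabla\sigma$ just noted and the multilinearity of $(U,V,W)\mapsto H_{\sigma(W,U)}K_{\overline V}$, forces the fully symmetrized — and hence, by the established identity, the full covariant derivative $(\nabla_W R^{f^*Q})(U,\overline V)$ — to vanish for all $(1,0)$-vectors. Here one must be slightly careful because $R^{f^*Q}(U,\overline V)$ is not symmetric in $U$ and $V$ separately, but its covariant derivative in a $(1,0)$-direction $W$ only introduces $\sigma(W,U)$ on the $S$-side and leaves the $\overline V$ slot as a $(0,1)$-holonomy factor; polarizing in the pair $(W,U)$ (using Codazzi) and then letting $\overline V$ range freely recovers the vanishing of $\nabla_W R^{f^*Q}$ in full.

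The main obstacle I anticipate is the bookkeeping in step one: correctly accounting for which pieces of $\nabla_W(H_U K_{\overline V})$ survive. Because $H$ is a $(1,0)$-form and $K$ a $(0,1)$-form on the ambient Grassmannian with $\nabla^{Gr}H=\nabla^{Gr}K=0$, differentiating along $M$ in a $(1,0)$-direction $W$ should produce contributions only from the second fundamental form $\sigma$ entering through the argument $U$ of $H_U$ (the normal component $\sigma(W,U)$ feeding into $H$ as a map on normal vectors), while the term coming from differentiating $K_{\overline V}$ — which would involve the $(1,0)$-part of $\nabla\sigma$ hitting a $(0,1)$-slot — must be shown to vanish, plausibly by type considerations or by Lemma 3.3 after suitable manipulation. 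Getting the signs and the placement of $H$ versus $K$ right (consistent with Lemma 2.1's $h_Q(Hs,t)=-h_S(s,Kt)$) is the delicate part; once the identity $(\nabla_W R^{f^*Q})(U,\overline V) = -H_{\sigma(W,U)}K_{\overline V}$ is in hand, the equivalence in the lemma is a short polarization argument.
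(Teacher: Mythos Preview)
Your approach is essentially the paper's: establish the identity $(\nabla_W R^{f^*Q})(U,\overline V)=-H_{\sigma(W,U)}K_{\overline V}$ by differentiating $R^{f^*Q}=-HK$ and using the Gauss decomposition, then read off the equivalence. The paper phrases the computation as $(\nabla_Z R^{f^*Q})(U,\overline V)=-(\nabla_Z H)(U)K_{\overline V}$ together with the observation $(\nabla_U H)(U)=H_{\sigma(U,U)}$, which is exactly your identity specialized to $W=Z$.

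Two small corrections. First, your invocation of Lemma~3.3 and Codazzi is unnecessary and logically misplaced: Lemma~3.3 uses projective flatness, whereas Lemma~3.6 is a purely formal equivalence that does not. The symmetry of $H_{\sigma(W,U)}K_{\overline V}$ in $W,U$ that you want comes directly from the symmetry of $\sigma$, not from $\nabla\sigma$. Second, your description of the $K$-term is slightly off: differentiating $K_{\overline V}$ in the $(1,0)$-direction $W$ contributes \emph{nothing}, not ``only the second-fundamental-form contribution''; this is because (extending $V$ holomorphically) $\nabla^M_W\overline V=0$ and $K$, like $H$, is parallel for the ambient connection. That is precisely why the paper's computation collapses to $-(\nabla_Z H)(U)K_{\overline V}$ with no $K$-derivative term. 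Your polarization for the converse direction can be made rigorous (vanishing of a polynomial in $t,\overline t$ forces all mixed coefficients to vanish), but the paper does not spell this out either.
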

\begin{proof}
We have
\begin{align}
\left( \nabla_Z R^{f^*Q}\right) (U,\overline{V}) = -\nabla_Z (H_UK_{\overline{V}}) + H_{\nabla_Z U}K_{\overline{V}} = -(\nabla_Z H)(U)K_{\overline{V}}. \nonumber
\end{align}
Since we can easily show that $H_{\sigma (U,U)} = (\nabla_U H)(U)$, we obtain
\begin{equation*}
-H_{\sigma (U,U)}K_{\overline{U}} = (\nabla_U H)(U)K_{\overline{U}} = \left( \nabla_U R^{f^*Q}\right) (U,\overline{U}) =0. 
\end{equation*}
\end{proof}

It follows from the assumption that
\begin{align}
\left( \nabla_Z R^{f^*Q}\right) (U,\overline{V}) &= \nabla_Z \left( R^{f^*Q}(U,\overline{V})\right) - R^{f^*Q}(\nabla_Z U,\overline{V}) \nonumber \\
&= \frac{1}{q}\nabla_Z \left( h_M(U,V) \right) {\rm Id}_Q - \frac{1}{q}h_{Gr}(\nabla_Z U,V){\rm Id}_Q \nonumber \\
&= \frac{1}{q}h_{Gr}(\nabla_Z U,V){\rm Id}_Q - \frac{1}{q}h_{Gr}(\nabla_Z U,V){\rm Id}_Q = 0, \nonumber
\end{align}
where $U$, $V$, $Z$ are (1,0)-vectors on $M$. Then it follows from the Lemma 3.6 and the equation (3.10) that
\begin{align}
\\
h_{Gr}\left( R^{\perp}(U,\overline{U})(\sigma (U,U)), \sigma (U,U) \right) &= h_{Gr}(-H_UK_{\overline{U}}H_{\sigma (U,U)},H_{\sigma (U,U)}) +\| A_{\sigma (U,U)}\overline{U} \| ^2 \nonumber \\
&= \frac{1}{q}\| \sigma (U,U) \| ^2 + \| A_{\sigma (U,U)} \overline{U} \| ^2. \nonumber
\end{align}
Using the equation of Gauss, we have
\begin{align}
\\
h_{Gr}\left( \sigma (R^M(U,\overline{U})U,U),\sigma (U,U) \right) &= h_{Gr}\left( R^M(U,\overline{U})U,A_{\sigma (U,U)}\overline{U} \right) \nonumber \\
&= h_{Gr}\left( R^{Gr}(U,\overline{U})U,A_{\sigma (U,U)}\overline{U}\right) - \| A_{\sigma (U,U)}\overline{U} \| ^2 \nonumber \\
&= -2 h_{Gr}(H_UK_{\overline{U}}H_U,H_{A_{\sigma (U,U)}\overline{U}}) - \| A_{\sigma (U,U)}\overline{U} \| ^2 \nonumber \\
&= \frac{2}{q} \| \sigma (U,U) \| ^2 - \| A_{\sigma (U,U)}\overline{U} \| ^2. \nonumber
\end{align}
Combining the equations (3.11) and (3.12) with (3.9), we obtain
\begin{align}
\\
(\nabla^2 T)(\overline{U},U,U,U,\overline{U},\overline{U}) &= -\left( \frac{1}{q} \| \sigma (U,U) \| ^2 + \| A_{\sigma (U,U)} \overline{U} \| ^2 \right)  \nonumber \\
&+ 2 \left(  \frac{2}{q} \| \sigma (U,U) \| ^2 - \| A_{\sigma (U,U)}\overline{U} \| ^2 \right) + \| (\nabla \sigma) (U,U,U) \| ^2 \nonumber \\
&= \frac{3}{q} \left( \| \sigma (U,U) \| ^2 - q \| A_{\sigma (U,U)}\overline{U} \| ^2 \right) + \| (\nabla \sigma) (U,U,U)\| ^2. \nonumber
\end{align}

By integrating both sides of the equation (3.13), Lemma 3.4 yields
\begin{eqnarray}
\\
\frac{3}{q}\int_{UM} \left( \| \sigma (U,U) \| ^2 - q \| A_{\sigma (U,U)}\overline{U} \| ^2 \right) dU +\int_{UM}\| (\nabla \sigma) (U,U,U)\| ^2 dU =0. \nonumber
\end{eqnarray}

From now on we assume that the holomorphic sectional curvature of $M$ is greater than or equal to $\frac{1}{q}$. 

Let us compute the first term of the left hand side of the equation (3.14). We define $\xi \in N$ as $\sigma (U,U) = \| \sigma (U,U)\| \xi .$ Then we have
$$A_{\sigma (U,U)}\overline{U} = \| \sigma (U,U) \| A_{\xi}\overline{U}.$$

We denote by $\tau$ the real structure on the complexification $T_{\mathbb{C}}M$ of the tangent bundle over $M$. Let $B:=A_{\xi}\circ \tau$. $B$ is an anti-linear transformation and satisfies the following equation:
$$h_{Gr}(BU,V) = h_{Gr}(BV,U),\qquad {\rm for}\ U,V \in T_{{1,0}_x}M,\ x\in M.$$
If we regard $B$ as a real linear transformation on the real vector space  with an inner product $\mathfrak{Re}(h_{Gr})$ , then $B$ is a symmetric transformation. Thus $B$ has the nonnegative maximum eigenvalue $\lambda$ and we denote by $e$ the corresponding unit eigenvector. By Cauchy-Schwarz inequality, we have
$$\lambda = h_{Gr}(Be,e) = h_{Gr}(A_{\xi}\overline{e},e) = h_{Gr}(\xi , \sigma (e,e)) \leq \| \sigma (e,e) \| .$$
It follows from the equation (3.3), the lemma 3.2 and the hypothesis that
\begin{eqnarray}
\| A_{\xi}\overline{U} \| ^2 \leq \lambda ^2 \leq \| \sigma (e,e) \| ^2 \leq \frac{1}{q}. \nonumber
\end{eqnarray}
It follows that
\begin{align}
\| \sigma (U,U) \| ^2 - q \| A_{\sigma (U,U)}\overline{U}\| ^2 &= \| \sigma (U,U) \| ^2(1-q\| A_{\xi}\overline{U} \| ^2) \nonumber \\
&\geq \| \sigma (U,U) \| ^2(1-q\cdot \frac{1}{q}) \geq 0. \nonumber
\end{align}

Thus it follows from the equation (3.14) that
$$\| (\nabla \sigma )(U,U,U)\| ^2 = 0.$$
Since $\nabla \sigma $ is a symmetric tensor, $\nabla \sigma$ vanishes.

Conversely, we assume that $M$ has the parallel second fundamental form. From the equation (3.3) and the Lemma 3.3, it is enough to prove that $\| \sigma (U,U) \| ^2\leq \frac{1}{q}$, where $U$ is an arbitrary unit $(1,0)$-vector on $M$. Let $T$ be a $(2,2)$-covariant tensor on $M$ defined by the equation (3.8). Since the second fundamental form $\sigma$ is parallel, $T$ is also parallel and so $\nabla^2T = 0$. It follows from the equation (3.13) that
\begin{equation}
\| \sigma (U,U) \| ^2 - q \| A_{\sigma (U,U)}\overline{U} \| ^2 = 0.
\end{equation}
Cauchy-Schwarz inequality and the equation (3.15) imply that
\begin{align}
\| \sigma (U,U) \| ^2 &= h_{Gr}\left( \sigma (U,U), \sigma(U,U) \right) = h_{Gr}\left( U, A_{\sigma(U,U)}\overline{U} \right) \nonumber \\
&\leq \| A_{\sigma(U,U)}\overline{U} \| = \frac{1}{\sqrt{q}}\| \sigma (U,U) \| . \nonumber
\end{align}
Therefore, $\| \sigma (U,U) \| ^2 \leq \frac{1}{q}$.

\end{proof}


\bibliographystyle{amsplain}

\end{document}